\documentclass[12pt,reqno,a4paper]{amsart}

\usepackage{amssymb}
\usepackage{graphicx}
\usepackage[latin1]{inputenc}

\numberwithin{equation}{section}

\def\ZZ{{\mathbb Z}}

\def\Hdepth{\operatorname{Hdepth}}
\def\depth{\operatorname{depth}}

\def\coef#1{\left\langle#1\right\rangle}
\def\cl#1{\lceil#1\rceil}
\def\Ga{\Gamma}
\def\ep{\varepsilon}

\def\mm{{\mathfrak m}}

\newtheorem{lemma}{Lemma}[section]

\newtheorem{theorem}[lemma]{Theorem}
\newtheorem{proposition}[lemma]{Proposition}

\theoremstyle{definition}

\newtheorem{remark}[lemma]{Remark}

\textwidth=15cm \textheight=22cm \topmargin=0.5cm \oddsidemargin=0.5cm
\evensidemargin=0.5cm \advance\headheight1.15pt

\title{Hilbert depth of powers of the maximal ideal}

\author{Winfried Bruns}
\address{Universit\"at Osnabr\"uck, Institut f\"ur Mathematik, 49069 Osnabr\"uck, Germany}
\email{wbruns@uos.de}

\author{Christian Krattenthaler$^{\dagger}$}
\address{Fakult\"at f\"ur Mathematik, Universit\"at Wien,
Nordbergstrasze~15, A-1090 Vienna, Austria.
WWW: \tt http://www.mat.univie.ac.at/\lower0.5ex\hbox{\~{}}kratt.}

\author{Jan Uliczka}
\address{Universit\"at Osnabr\"uck, Institut f\"ur Mathematik, 49069 Osnabr\"uck, Germany}
\email{Jan.Uliczka@uos.de}

\begin{document}

\thanks{$^\dagger$Research partially supported by the Austrian
Science Foundation FWF, grants Z130-N13 and S9607-N13,
the latter in the framework of the National Research Network
``Analytic Combinatorics and Probabilistic Number Theory"}

\begin{abstract}
The Hilbert depth of a module $M$ is the maximum depth that occurs
among all modules with the same Hilbert function as $M$. In this
note we compute the Hilbert depths of the powers of the irrelevant
maximal ideal in a standard graded polynomial ring.
\end{abstract}

 \maketitle

 \section{Introduction}

 In \cite{BKU} and \cite{Ul} the authors have investigated the
relationship between Hilbert series and depths of graded modules
over standard graded and multigraded polynomial rings. In this paper
we will consider only the standard graded case, i.e., finitely
generated graded modules over polynomial rings $R=K[X_1,\dots,X_n]$
for which $K$ is a field and $\deg X_i=1$ for $i=1,\dots,n$.

We refer the reader to \cite{BH} for the basic theory of Hilbert
functions and series. Let us just recall that the \emph{Hilbert
function} of a graded $R$-module $M=\bigoplus_{k\in \ZZ} M_k$ is
given by
$$
H(M,k)=\dim_K M_k,\qquad k\in \ZZ,
$$
and that the Laurent series
$$
H_M(T)=\sum_{k\in\ZZ} H(M,k)\,T^k
$$
is called the \emph{Hilbert series} of $M$. The Hilbert series is
the Laurent expansion at $0$ of a rational function as in
\eqref{rational} with a Laurent polynomial in the numerator.

Let us say that a Laurent series $\sum_{k\in\ZZ}a_kT^k$ is
\emph{positive} if $a_k\ge0$ for all $k$. Hilbert series are
positive by definition, and it is not surprising that positivity is
the central condition in the following theorem that summarizes the
results of \cite{Ul}. It describes the maximum depth that a graded
module with given Hilbert series can have.

\begin{theorem}\label{standard}
Let $R=K[X_1,\dots,X_n]$ as above, and let $M\neq0$ be a finitely
generated graded $R$-module with Hilbert series
\begin{equation}\label{rational}
H_M(T)=\frac{Q_M(T)}{(1-T)^n},\qquad Q_M(T)\in\ZZ[T,T^{-1}].
\end{equation}
Then the following numbers coincide:
\begin{enumerate}
\item $\max\{\depth N: H_M(T)=H_N(T)\}$,
\item the maximum $d$ such that $H_M(T)$ can be written as
\begin{equation}\label{partial}
H_M(T)=\sum_{e=d}^n \frac{Q_e(T)}{(1-T)^e},\qquad Q_e(T)\in\ZZ[T,T^{-1}],
\end{equation}
with positive Laurent polynomials $Q_e(T)$,
\item $\max\{p: (1-T)^p H_M(T)\text{ positive}\}$,
\item $n-\min\{q: Q_M(T)/(1-T)^q\text{ positive}\}$.
\end{enumerate}
\end{theorem}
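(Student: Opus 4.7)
The plan is to prove the cycle $(3)\Leftrightarrow(4)$, $(2)\Rightarrow(1)$, $(1)\Rightarrow(3)$, and $(3)\Rightarrow(2)$; let $d_1,d_2,d_3,d_4$ denote the numbers in items (1)--(4). The equivalence $d_3 = d_4$ is formal: since $(1-T)^p H_M(T) = Q_M(T)/(1-T)^{n-p}$, the substitution $q = n-p$ identifies the two extremization problems.

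For $(2)\Rightarrow(1)$, given a decomposition $H_M(T) = \sum_{e=d_2}^{n} Q_e(T)/(1-T)^e$ with positive $Q_e(T) = \sum_k q_{e,k} T^k$, I would exhibit the finitely generated graded module
$$
N = \Dirsum_{e=d_2}^{n} \Dirsum_{k} \bigl(R/(X_{e+1},\dots,X_n)\bigr)(-k)^{q_{e,k}};
$$
each summand is Cohen--Macaulay of dimension $e\ge d_2$ with Hilbert series $T^k/(1-T)^e$, so $\depth N \ge d_2$ and $H_N = H_M$, yielding $d_1 \ge d_2$. For $(1)\Rightarrow(3)$, after replacing $K$ by an infinite extension (faithful flatness preserves both $\depth$ and Hilbert series), an optimal $N$ admits an $N$-regular sequence $\ell_1,\dots,\ell_{d_1}$ of linear forms by standard prime avoidance; the Hilbert series $(1-T)^{d_1}H_M(T)$ of $N/(\ell_1,\dots,\ell_{d_1})N$ is then positive by definition, so $d_3 \ge d_1$.

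The main step, and the main obstacle, is $(3)\Rightarrow(2)$, which I would formulate as a purely combinatorial decomposition lemma: \emph{if $P(T) = A(T)/(1-T)^m$ with $A\in\ZZ[T,T^{-1}]$ is positive as a Laurent series, then $P$ admits a representation $P = \sum_{e=0}^{m} R_e(T)/(1-T)^e$ with positive Laurent polynomials $R_e$.} Applying this to $P = (1-T)^{d_3}H_M(T) = Q_M(T)/(1-T)^{n-d_3}$ and then dividing through by $(1-T)^{d_3}$ yields the decomposition certifying $d_2 \ge d_3$.

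I would prove the lemma by induction on $m$. Observe that $A(1)\ge 0$, else the coefficients $a_k$ of $P$ are eventually negative; if $A(1)=0$ one reduces to $m-1$ by factoring $A(T) = (1-T)A'(T)$. When $A(1)>0$, the plan is to peel off a term $A(1)T^N/(1-T)^m$ for a sufficiently large exponent $N$, which cancels the leading asymptotic term $A(1)k^{m-1}/(m-1)!$ of the coefficient sequence of $P$, leaving a remainder of the form $B(T)/(1-T)^{m-1}$ amenable to the inductive hypothesis. The delicate point is to choose $N$ large enough that the subleading discrepancies in the peeling step do not create negative coefficients; this requires a careful term-by-term comparison of the asymptotic expansions, and is where most of the real work lies.
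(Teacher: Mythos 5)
Your cycle $(3)\Leftrightarrow(4)$, $(2)\Rightarrow(1)$, $(1)\Rightarrow(3)$, $(3)\Rightarrow(2)$ is the right architecture, and the first three steps are correct and complete: $(3)\Leftrightarrow(4)$ is the formal substitution $q=n-p$; the direct sum of shifted copies of $R/(X_{e+1},\dots,X_n)$ in $(2)\Rightarrow(1)$ has the required Hilbert series and depth $\ge d_2$ (each summand is Cohen--Macaulay of dimension $e$, and depth of a finite direct sum is the minimum); and the regular-sequence-of-linear-forms argument for $(1)\Rightarrow(3)$, after a faithfully flat base change to an infinite field, is standard. You have also correctly isolated the crux: the paper itself singles out exactly your decomposition lemma --- that a positive Laurent series of the form $A(T)/(1-T)^m$ with $A$ a Laurent polynomial admits a representation $\sum_{e=0}^m R_e(T)/(1-T)^e$ with positive Laurent polynomial numerators --- as ``the crucial point of the proof,'' and then defers the proof of Theorem~\ref{standard} entirely to Uliczka's paper \cite{Ul}. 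So there is no in-paper proof for direct comparison, but your reduction to this lemma is the intended one.

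The gap is the one you flag yourself: you do not actually carry out the peeling step. It is, however, a fixable gap, and the strategy you sketch does work; here is what the ``careful term-by-term comparison'' amounts to. Dispose first of $A(1)=0$ by factoring out $(1-T)$, and of the base cases $m\le 1$. Assume $A(1)>0$ and $m\ge2$, and take $N$ strictly larger than the top degree of $A$. Then for $k\ge N$ the coefficient $a_k$ of $P$ is given by a fixed polynomial $P(k)=\sum_j A_j\binom{k-j+m-1}{m-1}$ of degree $m-1$ in $k$ with leading coefficient $A(1)/(m-1)!$. Subtracting $A(1)T^N/(1-T)^m$ leaves the coefficients $a_k\ge 0$ untouched for $k<N$, while for $k=N+i$ with $i\ge0$ the remaining coefficient is
$$
D_N(i)\ :=\ P(N+i)-A(1)\binom{i+m-1}{m-1},
$$
a polynomial in $i$ of degree at most $m-2$ (the degree-$(m-1)$ terms cancel because both have leading coefficient $A(1)/(m-1)!$). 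Its $i^l$-coefficient is $P^{(l)}(N)/l!$ minus a constant independent of $N$, and $P^{(l)}(N)/l!\sim\frac{A(1)}{(m-1)!}\binom{m-1}{l}N^{m-1-l}\to+\infty$ as $N\to\infty$ for each $0\le l\le m-2$. Hence for $N$ sufficiently large every coefficient of $D_N$ is nonnegative, so $D_N(i)\ge0$ for all $i\ge0$. The numerator $A(T)-A(1)T^N$ vanishes at $T=1$, so the remainder is $B(T)/(1-T)^{m-1}$ with $B$ a Laurent polynomial and the series positive, and the induction closes. Spelling out this coefficient estimate --- or any equivalent one --- is what is missing from your write-up; as it stands, the crucial implication $(3)\Rightarrow(2)$ is asserted but not proved.
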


The crucial point of the proof of Theorem~\ref{standard}  is to show
that every positive Laurent series that can be written in the form
\eqref{rational} has a representation of type \eqref{partial} (with
$d\ge 0$).

In \cite{Ul}, Theorem~\ref{standard}(1) is used to define the
\emph{Hilbert depth} $\Hdepth M$ of $M$, whereas \cite{BKU} bases
the definition of Hilbert depth on (2). In view of the theorem, this
difference is irrelevant in the standard graded case, but in the
multigraded case the equivalence of \ref{standard}(1) and a suitable
generalization of (2) is widely open, and can be considered as a
Hilbert function variant of the Stanley conjecture (see \cite{BKU}
for this connection). (Note that Theorem~\ref{standard}(3) and (4)
cannot be transferred easily to the multigraded situation.)

The Hilbert depth of the maximal ideal $\mm=(X_1,\dots,X_n)$ is
known:
$$
\Hdepth \mm=\left\lceil \frac n2\right\rceil.
$$
This was observed in \cite{Ul} and will be proved again below. (In
fact, by a theorem of Bir\'o et al.\ \cite{BHKTY}, the (multigraded)
Stanley depth of $\mm$ is given by $\lceil n/2\rceil$.)

In general, Hilbert depth is hard to compute since one almost
inevitably encounters alternating expressions whose nonnegativity is
to be decided. Not even for all syzygy modules of $\mm$ Hilbert
depth is known precisely; see \cite{BKU}. Nevertheless, the main
result of our paper shows that the Hilbert depths of the powers of
$\mm$ can be determined exactly.

\begin{theorem}\label{powers}
For all $n$ and $s$ one has
$$
\Hdepth \mm^s=\left\lceil \frac n{s+1}\right\rceil.
$$
\end{theorem}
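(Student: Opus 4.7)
The plan is to invoke Theorem~\ref{standard}(3), which identifies $\Hdepth\mm^s$ with the largest $p$ such that $(1-T)^pH_{\mm^s}(T)$ is a positive Laurent series. Writing $H_{\mm^s}(T)=(1-T)^{-n}-\sum_{k=0}^{s-1}\binom{n+k-1}{n-1}T^k$, one gets
\[
(1-T)^p H_{\mm^s}(T) \;=\; \frac{1}{(1-T)^{n-p}} \;-\; (1-T)^p\sum_{k=0}^{s-1}\binom{n+k-1}{n-1}T^k.
\]
Coefficients of $T^k$ with $k\ge s+p$ collapse to $\binom{n+k-p-1}{n-p-1}\ge 0$, and those with $0\le k\le s-1$ vanish by a Chu--Vandermonde identity. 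The whole question therefore concentrates on the ``critical'' coefficients
\[
c_r(p) \;:=\; [T^{s+r}](1-T)^p H_{\mm^s}(T) \;=\; \sum_{i=0}^{r}(-1)^i\binom{p}{i}\binom{n+s+r-1-i}{n-1},\qquad 0\le r\le p-1.
\]

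For the upper bound $\Hdepth\mm^s\le\lceil n/(s+1)\rceil$ I would examine $c_1(p)$. A short manipulation using $\binom{n+s}{n-1}/\binom{n+s-1}{n-1}=(n+s)/(s+1)$ gives $c_1(p)=\binom{n+s-1}{n-1}\bigl(\frac{n+s}{s+1}-p\bigr)$, which is strictly negative precisely when $(p-1)(s+1)\ge n$. Choosing $p=\lceil n/(s+1)\rceil+1$ makes $p-1\ge n/(s+1)$, so $c_1(p)<0$ and $(1-T)^pH_{\mm^s}(T)$ fails to be positive.

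The substantive work is the reverse inequality: for $p=\lceil n/(s+1)\rceil$ one must show $c_r(p)\ge 0$ for every $r\in\{0,\dots,p-1\}$. The cases $r=0$ (where $c_0(p)=\binom{n+s-1}{n-1}$) and $r=1$ (where now $(p-1)(s+1)\le n-1$) are immediate from the same formula, but general $r$ requires genuine combinatorial input. I see two plausible routes. The first is a direct transformation of the alternating sum $c_r(p)$ into a manifestly nonnegative form---via iterated Chu--Vandermonde contractions, a hypergeometric transformation of Pfaff or Whipple type, or summation by parts exploiting the recursion $c_r(p)=c_r(p-1)-c_{r-1}(p-1)$ that falls out of Pascal's identity. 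The second is to construct, via Theorem~\ref{standard}(2), an explicit decomposition $H_{\mm^s}(T)=\sum_{e=p}^{n}Q_e(T)/(1-T)^e$ with positive $Q_e(T)$, which amounts to a Hilbert-series analogue of a Stanley decomposition of $\mm^s$ of depth $\lceil n/(s+1)\rceil$.

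The principal obstacle is unquestionably the positivity of $c_r(p)$ for general $r$: the sum is signed, the target bound $\lceil n/(s+1)\rceil$ is sharp (as the upper-bound step already witnesses), and no naive induction on $n$ or $s$ seems to close the argument without additional combinatorial input. I therefore expect the proof to hinge on a nontrivial binomial identity producing an explicitly nonnegative expression for $c_r(p)$, of a kind that the paper's co-authorship makes plausible.
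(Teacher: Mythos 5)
The proposal correctly sets up the problem: it invokes Theorem~\ref{standard}(3), derives the upper bound $\Hdepth\mm^s\le\lceil n/(s+1)\rceil$ by the same coefficient computation at $T^{s+1}$ that the paper uses, and correctly isolates the crux of the lower bound as the nonnegativity of the finitely many ``critical'' coefficients $c_r(p)$ for $0\le r\le p-1$, $p=\lceil n/(s+1)\rceil$. This is faithful to the paper's framing (cf.\ Proposition~\ref{prop:0}).

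However, the hard direction is not actually proved. You verify $c_0(p)\ge0$ and $c_1(p)\ge0$ and then stop, listing possible ``routes'' (iterated Chu--Vandermonde, a Pfaff/Whipple transformation, summation by parts, or an explicit positive decomposition \`a la Theorem~\ref{standard}(2)) without carrying any of them out. You acknowledge this yourself: ``The principal obstacle is unquestionably the positivity of $c_r(p)$ for general $r$.'' That obstacle is precisely the content of the theorem, and in the paper it consumes the bulk of the work: Lemma~\ref{lem:4} converts $c_r(p)$ into the two-sided inequality \eqref{eq:ungl} of Proposition~\ref{prop:1}, and proving \eqref{eq:ungl} is not a closed-form binomial manipulation at all --- it proceeds by extending $k$ (and later $n$, $\ell$) to real parameters, comparing logarithmic derivatives via the digamma function (Lemmas~\ref{lem:1}, \ref{lem:3}, \ref{lem:5}), and reducing to boundary cases. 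So your anticipated route (``a nontrivial binomial identity producing an explicitly nonnegative expression for $c_r(p)$'') does not match what the paper does, and, more importantly, you do not supply any argument that closes the gap. As it stands the proposal is a correct reduction plus a correct easy half, but the theorem itself remains unproved.
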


That $\lceil n/(s+1)\rceil$ is an upper bound is seen easily. The
Hilbert series of $\mm^s$ is
$$
\binom{n+s-1}{s}T^s+\binom{n+s}{s+1}T^{s+1}+\cdots
$$
Thus the coefficient of $T^{s+1}$ in $(1-T)^pH_{\mm^s}(T)$ is
$$
\binom{n+s}{s+1}-p\binom{n+s-1}{s},
$$
and the difference is negative unless
$p\le\lfloor(n+s)/(s+1)\rfloor=\lceil n/(s+1)\rceil$. That the
condition $p\le\lceil n/(s+1)\rceil$ is sufficient for the
positivity of $(1-T)^pH_{\mm^s}(T)$ will be shown in the
remainder of this paper.

\medskip
The first step in the proof is the computation of
$(1-T)^rH_{\mm^s}(T)$ since, in view of Theorem~\ref{standard}(3),
we want to find the maximum $r$ for which this series is positive.

\begin{proposition} \label{prop:0}
For any integer $0 < r < n$, we have
\begin{multline} \label{eq:crit}
(1-T)^r H_{\mm^s}(T) = \binom{n+s-1}{s} T^s \\
+ \sum_{k=s+1}^{r+s-1} \left[ \binom{n+k-1-r}{k}
+ (-1)^{k-1} \sum_{j=0}^{s-1} (-1)^j \binom{r}{k-j} \binom{n+j-1}{j} \right] T^k \\
+ \sum_{k=r+s}^{\infty}  \binom{n+k-1-r}{k}T^k.
\end{multline}
\end{proposition}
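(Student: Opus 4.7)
The proof is a direct generating-function computation, so the plan is simply to set it up cleanly and then keep careful track of indices.

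The starting point is the observation that $\mm^s=R_{\ge s}$, hence
\[
H_{\mm^s}(T) \;=\; H_R(T) \;-\; \sum_{j=0}^{s-1}\binom{n+j-1}{j}T^j
\;=\; \frac{1}{(1-T)^n} \;-\; \sum_{j=0}^{s-1}\binom{n+j-1}{j}T^j.
\]
Multiplying through by $(1-T)^r$ yields
\[
(1-T)^r H_{\mm^s}(T) \;=\; \frac{1}{(1-T)^{n-r}} \;-\; (1-T)^r\sum_{j=0}^{s-1}\binom{n+j-1}{j}T^j,
\]
and the coefficient of $T^k$ on the right equals
\[
\binom{n+k-1-r}{k} \;-\; \sum_{j=0}^{s-1}(-1)^{k-j}\binom{r}{k-j}\binom{n+j-1}{j},
\]
with the convention $\binom{r}{k-j}=0$ for $k-j<0$ or $k-j>r$. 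This already gives the third line of \eqref{eq:crit}, because for $k\ge r+s$ every term in the subtracted sum vanishes.

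The middle and first lines are obtained by rewriting the sign. Using $-(-1)^{k-j}=(-1)^{k-1}(-1)^j$, the subtracted sum becomes
\[
(-1)^{k-1}\sum_{j=0}^{s-1}(-1)^{j}\binom{r}{k-j}\binom{n+j-1}{j},
\]
which gives exactly the bracketed expression in \eqref{eq:crit} for $s+1\le k\le r+s-1$.

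The remaining check is that the coefficient of $T^k$ vanishes for $0\le k\le s-1$ and equals $\binom{n+s-1}{s}$ for $k=s$. Both facts follow from the binomial identity
\[
\binom{n+k-1-r}{k} \;=\; \sum_{j=0}^{k}(-1)^{k-j}\binom{r}{k-j}\binom{n+j-1}{j},
\]
which is just the coefficient comparison in $(1-T)^r\cdot(1-T)^{-n}=(1-T)^{-(n-r)}$. For $k\le s-1$ the full sum over $0\le j\le k$ appears in the subtraction, so the coefficient is zero; for $k=s$ the subtraction omits the single term $j=s$, which contributes $\binom{n+s-1}{s}$.

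There is no real obstacle here: the whole argument is elementary manipulation of the Hilbert series together with the Chu--Vandermonde style identity above. The only point where one must be careful is the book-keeping of signs and the range of $j$ when the sum $\sum_{j=0}^{s-1}$ is extended (or truncated) via the identity; everything else is routine.
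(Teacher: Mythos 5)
Your proof is correct and follows the approach the paper sketches with the phrase ``by the binomial expansion of $(1-T)^r$''; it is essentially the same generating-function manipulation that the paper carries out (for the first identity \eqref{eq:8}) inside the proof of Lemma~\ref{lem:4}. Writing $H_{\mm^s}(T)=(1-T)^{-n}-\sum_{j=0}^{s-1}\binom{n+j-1}{j}T^j$, multiplying by $(1-T)^r$, and reading off coefficients is exactly what one needs; your check that the coefficient vanishes for $k\le s-1$, equals $\binom{n+s-1}{s}$ for $k=s$, and collapses to $\binom{n+k-1-r}{k}$ for $k\ge r+s$ is complete and correct.
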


This is easily proved by induction on $r$ or by the binomial expansion
of $(1-T)^r$.

The critical term in \eqref{eq:crit} is the one in the middle row.
In the next section we will find an alternative expression for it.
The positivity of this expression for $r\le \lceil n/(s+1)\rceil$
will be stated in Proposition~\ref{prop:1}. Its proof is the subject
of Section~3.

\bigskip\noindent
{\bf Acknowledgement.}
We are indebted to Jiayuan Lin for pointing out a gap in the proof of
Lemma~\ref{lem:5} in the first version of this article, 
for providing an argument that fixes this gap, and
for giving us the permission to reproduce this argument here.

\section{Binomial identities}

One of the ingredients in the proof of Theorem~\ref{powers} via
Proposition~\ref{prop:1} in the next section are two alternative
expressions for the sum over $j$ in the critical term in
\eqref{eq:crit}. We provide two proofs: a direct one using
well-known identities for binomial and hypergeometric series, and a
--- perhaps more illuminating --- algebraic one which shows that the
three expressions give the Hilbert function of a certain module.

\begin{lemma} \label{lem:4}
For all positive integers $n,s,r,k$, we have
\begin{align}\notag
\sum _{j=s} ^{k}(-1)^{k-j}&\binom {n+j-1}j\binom r{k-j}\\
\label{eq:8}
&=
\binom {n+k-r-1}k+(-1)^{k-1}
\sum _{j=0} ^{s-1}(-1)^j\binom r{k-j}\binom {n+j-1}j\\
\label{eq:9}
&=
\binom {n+k-r-1}k
+(-1)^{k+s}\sum _{t=1}^{r}\binom {r-t}{k-s}\binom {n-t+s-1}{s-1}.
\end{align}
\end{lemma}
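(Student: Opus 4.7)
The plan is to derive both equalities from the standard convolution
$$
\sum_{j=0}^{k}(-1)^{k-j}\binom{n+j-1}{j}\binom{r}{k-j}=\binom{n+k-r-1}{k},
$$
which itself follows by equating coefficients of $T^k$ in $(1-T)^{r}\cdot(1-T)^{-n}=(1-T)^{r-n}$.

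Equality \eqref{eq:8} is then immediate: splitting the sum at $j=s$ and using $(-1)^{k-j}=-(-1)^{k-1}(-1)^{j}$ on the piece $0\le j\le s-1$ converts the standard identity directly into \eqref{eq:8}. For \eqref{eq:9}, subtracting \eqref{eq:8} reduces the claim to
$$
(-1)^{s+1}\sum_{j=0}^{s-1}(-1)^{j}\binom{n+j-1}{j}\binom{r}{k-j}=\sum_{t=1}^{r}\binom{r-t}{k-s}\binom{n-t+s-1}{s-1},
$$
which I would prove by induction on $r$. The base case $r=1$ is a one-term check: only $j=s-1$ survives in the sum on the left, and it matches the right only when $k=s$. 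For the inductive step, Pascal's rule $\binom{r+1}{k-j}=\binom{r}{k-j}+\binom{r}{k-j-1}$ expresses the left-hand side for $r+1$ in degree $k$ as the sum of the left-hand sides for $r$ in degrees $k$ and $k-1$. Substituting the inductive hypothesis for both summands and then combining the two resulting $t$-sums via the matching Pascal identity $\binom{r-t}{k-s}+\binom{r-t}{k-s-1}=\binom{r-t+1}{k-s}$ produces, after reindexing $t\mapsto t-1$, the right-hand side for $r+1$; the would-be extra term at $t=r+1$ carries the factor $\binom{0}{k-s}$ and so vanishes as soon as $k>s$.

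The main obstacle is the boundary case $k=s$, where the induction calls on the identity for $r$ at $k-1=s-1$, outside the intended range. I would handle this case separately: at $k=s$ the hockey-stick identity puts both sides into closed form, and the identity collapses to a single application of Pascal's rule $\binom{n+s-r-1}{s}=\binom{n+s-r-2}{s-1}+\binom{n+s-r-2}{s}$. Alternatively, as the paper indicates, a more illuminating module-theoretic proof is available: all three expressions compute the Hilbert function of the quotient $\mm^s/(X_1,\ldots,X_r)\mm^s$ in degree $k$; since $X_1,\ldots,X_r$ is a regular sequence on $\mm^s$, the Koszul complex gives the Hilbert series of this quotient as $(1-T)^{r}H_{\mm^s}(T)$, and the three formulas correspond to three natural ways of expanding this series.
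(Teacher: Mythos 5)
Your proof of \eqref{eq:8} (split the standard convolution identity at $j=s$) is exactly the paper's direct proof of that line. For \eqref{eq:9}, however, you take a genuinely different route: the paper proves \eqref{eq:10} by rewriting the left side as a $_3F_2$-series and applying two hypergeometric transformations (Bailey, Gasper--Rahman), while you give a purely elementary double induction on $r$, with the Pascal recursion relating the sums at degrees $k$ and $k-1$ on both sides and a separate hockey-stick argument to anchor the boundary $k=s$. The induction is sound for $k\ge s$ (note the lemma is implicitly restricted to that range: for $k<s$ the two lines \eqref{eq:8} and \eqref{eq:9} actually disagree, e.g.\ $n=2,s=2,r=1,k=1$ gives $0$ versus $1$), and after sorting out the slightly loose base-case description ("only $j=s-1$ survives" is correct only when $k=s$; for $k>s$ the whole sum vanishes, which is also fine) it gives a more elementary and arguably more transparent proof than the paper's.

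Your closing remark on the algebraic alternative is, however, incorrect as stated. The sequence $X_1,\dots,X_r$ is \emph{not} regular on $\mm^s$ for $r\ge 2$ --- one has $\depth\mm^s=1$ --- and $(1-T)^rH_{\mm^s}(T)$ is not the Hilbert series of $\mm^s/(X_1,\dots,X_r)\mm^s$. Indeed the three equal expressions in Lemma~\ref{lem:4} can be negative (take $n=2,s=1,r=2,k=2$, where the common value is $-1$), so they cannot all be values of a Hilbert function. The paper's algebraic proof instead introduces a $u$-th syzygy module $M$ of $R/(X_1,\dots,X_r)$ with $u=k-s+1$; the three formulas for $H(M,k)$ obtained from the two halves of the Koszul complex and from \cite[Prop.~3.7]{BKU} are what yield \eqref{eq:8} and \eqref{eq:9} after a sign bookkeeping, and it is these sub-expressions --- not the full right-hand sides of the lemma --- that are Hilbert function values.
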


\begin{proof}
We start with the direct proof.
Using the short
notation $\coef{T^k}f(T)$ for the coefficient of $T^k$ in the power
series $f(z)$, we have
\begin{align*}
\sum _{j=s} ^{k}(-1)^{k-j}&\binom {n+j-1}j\binom r{k-j}=
\coef{T^k}(1-T)^r
\sum _{j=s} ^{\infty}\binom {n+j-1}jT^j\\
&=
\coef{T^k}(1-T)^r\left((1-T)^{-n}-
\sum _{j=0} ^{s-1}\binom {n+j-1}jT^j\right)\\
&=
\coef{T^k}(1-T)^{r-n}-
\coef{T^k}(1-T)^{r}
\sum _{j=0} ^{s-1}\binom {n+j-1}jT^j\\
&=
\binom {n+k-r-1}k-
\sum _{j=0} ^{s-1}(-1)^{k-j}\binom r{k-j}\binom {n+j-1}j,
\end{align*}
which proves \eqref{eq:8}.

In order to see the equality between \eqref{eq:8} and
\eqref{eq:9}, we have to prove
\begin{equation} \label{eq:10}
\sum _{j=0} ^{s-1}(-1)^j\binom r{k-j}\binom {n+j-1}j
=
(-1)^{s+1}\sum _{t=1} ^{r}\binom {r-t}{k-s}\binom {n-t+s-1}{s-1}.
\end{equation}
In the sum on the left-hand side, we reverse the order of
summation (that is, we replace $j$ by $s-j-1$), and we rewrite
the resulting sum in hypergeometric notation. Thus, we obtain
that the left-hand side of \eqref{eq:10} equals
$$
(-1)^{s+1}\binom r{k-s+1}\binom {n+s-2}{s-1}
{} _{3} F _{2} \!\left [ \begin{matrix} { 1, 1 - s, 1 +
       k - r - s}\\ { 2 - n - s, 2 + k - s}\end{matrix} ; {\displaystyle
       1}\right ].
$$
Next we apply the transformation formula (see \cite[Ex.~7,
p.~98]{BailAA})
$$
{} _{3} F _{2} \!\left [ \begin{matrix} { a, b, c}\\ { d, e}\end{matrix} ;
   {\displaystyle 1}\right ]  =
   \frac {\Gamma (e)\,\Gamma( d + e-a - b - c)}
  {\Gamma(e-a)\,\Gamma(  d + e-b - c)}
  {} _{3} F _{2} \!\left [ \begin{matrix} { a, d-b , d-c}\\ { d, d +
    e-b - c}\end{matrix} ; {\displaystyle 1}\right ]
$$
to the $_3F_2$-series, to obtain
$$
{{\left( -1 \right) }^{s+1}}{\frac {({ \textstyle n}) _{s-1} \,
      ({ \textstyle -k + r + s}) _{1 + k - s} }
    {\left( 1 - n + r \right) \,({ \textstyle 1}) _{k - s} \,
      ({ \textstyle 1}) _{s-1} }}
{} _{3} F _{2} \!\left [ \begin{matrix} { 1 - k -
       n + r, 1, 1 - n}\\ { 2 - n - s, 2 - n + r}\end{matrix} ; {\displaystyle
       1}\right ].
$$
Now we apply the transformation formula
(\cite[Eq.~(3.1.1)]{GaRaAF})
$$
{} _{3} F _{2} \!\left [ \begin{matrix} { a, b, -N}\\ { d, e}\end{matrix} ;
   {\displaystyle 1}\right ]  =
{\frac { ({ \textstyle e-b}) _{N} }
      {({ \textstyle e}) _{N} }}
{} _{3} F _{2} \!\left [ \begin{matrix} { -N, b, d-a}\\ { d, 1 + b - e -
       N}\end{matrix} ; {\displaystyle 1}\right ]
$$
where $N$ is a nonnegative integer. After some simplification,
one sees that the resulting expression agrees with the
right-hand side of \eqref{eq:10}.

\medskip
Now we discuss the algebraic proof. We consider the $u$-th syzygy
$M$ of the residue class ring $S=R/(X_1,\dots,X_r)$ of
$R=K[X_1,\dots,X_n]$. There are two exact sequences from which the
Hilbert function of $M$ can be computed since S is resolved by the
Koszul complex of the sequence $X_1,\dots,X_r$. We simply break the
Koszul complex into two parts, inserting $M$ as the kernel or
cokernel, respectively, at the appropriate place:
\begin{gather*}
0\to M\to\bigwedge^{u-1}F(-u+1)\to\dots\to F\to R\to S\to 0,\qquad F=R^r,\\
0\to\bigwedge^r F(-r)\to\bigwedge^{r-1} F(-r+1) \to\dots\to
\bigwedge^u F(-r) \to M\to 0.
\end{gather*}
The computation of the Euler characteristic of the first complex in
degree $k$ yields the equation
\begin{align}
H(M,k)&=(-1)^u\left(\binom{n-r+k-1}{n-r-1}-
\sum_{i=0}^{u-1}(-1)^i\binom{r}{i}\binom{n+k-i-1}{k-i}\right)\notag\\
&=(-1)^u\left(\binom{n-r+k-1}{n-r-1}-
\sum_{j=k-u+1}^k (-1)^{k-j}\binom{r}{k-j}\binom{n+j-1}{j}
\right),
\label{right}
\end{align}
where we pass from the first to the second line by the substitution
$j=k-i$. In the same degree we obtain for the second complex
\begin{align}
H(M,k)&=\sum_{l=0}^{k-u}(-1)^l\binom{r}{u+l}\binom{n+k-u-l-1}{n-1}\notag\\
&=(-1)^{k-u}\sum_{j=0}^{k-u} (-1)^{j}\binom{r}{k-j}\binom{n+j-1}{j},\label{left}
\end{align}
where  we have substituted the summation index $l$ by $k-u-j$ in the
second line. On the other hand, we can also compute the Hilbert
function by \cite[Proposition~3.7]{BKU}. To this end, we fix $r$.
For $n=r$, \emph{loc.\ cit.} then yields
$$
H(M,k)=\sum_{t=1}^r \binom{r-t}{u-1}\binom{r-t+k-u}{k-u}.
$$
For the ring extension from $K[X_1,\dots,X_r]$ to
$K[X_1,\dots,X_n]$, we have to replace the dimensions of the
symmetric powers of $K^{r-t}$ by those of $K^{n-t}$; therefore
\begin{equation}
H(M,k)=\sum_{t=1}^r \binom{r-t}{u-1}\binom{n-t+k-u}{k-u}\label{hilbsyz}
\end{equation}
in the general case. Setting $s=k-u+1$, one arrives at \eqref{eq:8}
by equating \eqref{right} and \eqref{left}, while equating \eqref{left}
and \eqref{hilbsyz} leads to \eqref{eq:9}.
\end{proof}

\section{The proof of positivity}

In view of Proposition~\ref{prop:0} and Lemma~\ref{lem:4},
for the proof of Theorem~\ref{powers} we have to
show the inequality that we state below in Proposition~\ref{prop:1}.
Its proof requires several auxiliary lemmas, provided for in
Lemmas~\ref{lem:1}--\ref{lem:5}. The actual proof of
Proposition~\ref{prop:1} (and, thus, of Theorem~\ref{powers})
is then given at the end of this section.

\begin{proposition} \label{prop:1}
Let $n$ and $s$ be positive integers, and let $r=\cl{n/(s+1)}$. Then,
for all $k=s+1,s+2,\dots,s+r-1$, we have
\begin{equation} \label{eq:ungl}
\binom {n+k-r-1}k\ge
\sum _{t=1} ^{r}\binom {r-t}{k-s}\binom {n-t+s-1}{s-1}.
\end{equation}
\end{proposition}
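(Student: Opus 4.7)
Via the algebraic (second) proof of Lemma~\ref{lem:4}, the right-hand side of \eqref{eq:ungl} equals $H(M_u,k)$, the Hilbert function of the $u$-th syzygy of $S=R/(X_1,\dots,X_r)$ in its Koszul resolution, with $u=k-s+1$; the left-hand side is simply $H(S,k)$. So Proposition~\ref{prop:1} amounts to the claim $H(M_u,k)\le H(S,k)$ throughout the critical range $u=2,\dots,r$, and my goal is to prove this inequality by exploiting a reduction to an extremal boundary case.

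The plan is induction on $s$, after the change of variables $m=n-r$ (so that $r=\lceil n/(s+1)\rceil$ becomes $s(r-1)\le m\le sr$). Set
\[
f(m,s,r,k)=\binom{m+k-1}{k}-\sum_{\tau=0}^{r-1}\binom{\tau}{k-s}\binom{m+\tau+s-1}{s-1}.
\]
A one-line application of Pascal's rule yields the finite-difference identity
\[
f(m+1,s,r,k)-f(m,s,r,k)=f(m+1,\,s-1,\,r,\,k-1),
\]
in which $a:=k-s$ is preserved. By the inductive hypothesis at $s-1$, the right-hand side is non-negative (note that $m+1\ge (s-1)(r-1)$ holds automatically when $m\ge s(r-1)$), so $f(\cdot,s,r,k)$ is non-decreasing in $m$ on the admissible range, and matters reduce to the extremal case $f(s(r-1),s,r,k)\ge 0$.

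The base case $s=1$ collapses via the hockey-stick identity to $\binom{m+k-1}{k}\ge\binom{r}{k}$, immediate from $m\ge r-1$ and $k\ge 2$. At the extremal point $m=s(r-1)$, the lowest slice $a=1$ is in fact an equality, namely the identity
\[
\binom{sr}{s+1}=\sum_{\tau=1}^{r-1}\tau\binom{sr+\tau-1}{s-1},
\]
which one verifies by swapping the order of summation (write $\tau=\sum_{i=1}^{\tau}1$), applying hockey-stick to the resulting inner sum, and simplifying via $\binom{sr+r-1}{s+1}=(r-1)\binom{sr+r-1}{s}$.

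The genuine obstacle is then the extremal case for $a=2,\dots,r-1$:
\[
\binom{sr+a-1}{s+a}\ge\sum_{\tau=a}^{r-1}\binom{\tau}{a}\binom{sr+\tau-1}{s-1}.
\]
Here my plan would be a secondary induction on $a$, bootstrapping from the $a=1$ identity above and peeling off terms from the sum via Chu--Vandermonde, with the unimodality of binomial coefficients handling boundary subcases such as $a=r-1$ (where the RHS collapses to the single term $\binom{m+r+s-2}{s-1}$ and one compares two binomial coefficients with the same top). I expect Lemmas~\ref{lem:1}--\ref{lem:5} to implement precisely this extremal step; the delicacy of the inequality in the range $a\ge 2$, where the cushion between the two sides must be tracked carefully, is the main technical hurdle.
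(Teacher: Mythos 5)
Your reduction via the finite-difference identity is genuinely different from the paper's argument, and the identity itself is correct: with $f(m,s,r,k)=\binom{m+k-1}{k}-\sum_{\tau=0}^{r-1}\binom{\tau}{k-s}\binom{m+\tau+s-1}{s-1}$, Pascal's rule indeed gives $f(m+1,s,r,k)-f(m,s,r,k)=f(m+1,s-1,r,k-1)$, and your boundary identity $\binom{sr}{s+1}=\sum_{\tau=1}^{r-1}\tau\binom{sr+\tau-1}{s-1}$ checks out via the double hockey-stick computation you describe. Two things should be made explicit, though. First, your induction tacitly strengthens the statement to ``$f(m,s,r,k)\ge 0$ for all $m\ge s(r-1)$'' with no upper bound on $m$; this strengthening is necessary, because when you invoke the hypothesis at $s-1$ for $m'=m+1$ you may well have $m'>(s-1)r$, so the unstrengthened proposition at level $s-1$ does not cover the needed range. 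The strengthening is harmless (the inequality only gets easier as $n$ grows with $r$ fixed) but needs to be stated.

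The genuine gap is the extremal case $m=s(r-1)$ for $a\ge 2$, which you leave as a sketch and ``expect Lemmas~\ref{lem:1}--\ref{lem:5} to implement.'' That expectation is wrong: the paper does not reduce to an extremal value of $n$ (or $m$) at all. Its Lemmas~\ref{lem:1}, \ref{lem:3} and \ref{lem:5} take a real-analytic route --- binomial coefficients are extended to real arguments via gamma functions, digamma monotonicity/concavity estimates (Lemma~\ref{lem:1}) show the left-hand side of \eqref{eq:ungl} grows faster in $k$ than the right-hand side (Lemma~\ref{lem:3}), and a separate digamma/derivative argument in $n$ (Lemma~\ref{lem:5}) establishes the anchor case $k=s+2$ for all admissible $n$, not just extremal ones. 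So the paper propagates in the variable $k$ from $k=s+2$, whereas you propagate in $m$ from $m=s(r-1)$; these are orthogonal reductions, and the paper's lemmas give you nothing for your remaining inequality
$$
\binom{sr+a-1}{s+a}\ \ge\ \sum_{\tau=a}^{r-1}\binom{\tau}{a}\binom{sr+\tau-1}{s-1},\qquad a=2,\dots,r-1.
$$
This is the whole difficulty of the problem, repackaged; your sketch (``peel off via Chu--Vandermonde, unimodality for the boundary $a=r-1$'') is not an argument, and since the $a=1$ case is an exact equality, the margin you need to track from there is delicate. Until this extremal inequality is actually proved, the proposal is an incomplete proof, though the finite-difference reduction is a nice idea and could conceivably lead to a more combinatorial proof than the paper's analytic one.
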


\begin{remark} \label{rem:1}
The assertion of the proposition is trivially true if $r-1\le k-s$.
\end{remark}

In the following, we make frequent use of the classical
{\it digamma function} $\psi(x)$, which is  defined to be
the logarithmic derivative of
the {\it gamma function} $\Ga(x)$, i.e., $\psi(x)=\Ga'(x)/\Ga(x)$.

\begin{lemma} \label{lem:1}
For all real {\em(!)} numbers $n,k,s,t$ with $n,t\ge1$, $s\ge2$, $s+2\le
k\le r+s-t-1$, we have
$$
\psi(n+k-r)-\psi(k+1)>\psi(r-t-k+s+1)-\psi(k-s+1),
$$
where, as before, $r=\cl{n/(s+1)}$.
\end{lemma}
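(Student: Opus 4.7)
The plan is to prove the inequality by a cascade of monotonicity reductions in the variables $k$, $t$, $n$, $s$, and $r$, each exploiting the fact that the trigamma function $\psi'$ is strictly positive and strictly decreasing on $(0,\infty)$, together with the elementary bounds $1/x < \psi'(x) < 1/(x-1)$ for $x > 1$, obtained by comparing $\sum_{j\ge 0}(j+x)^{-2}$ with integrals. The terminal step is the numerical fact $\ln 2 > 7/12$.

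Let $f(k) := \psi(n+k-r) - \psi(k+1) - \psi(r-t-k+s+1) + \psi(k-s+1)$. Differentiating in $k$,
\[
f'(k) = \psi'(n+k-r) - \psi'(k+1) + \psi'(r-t-k+s+1) + \psi'(k-s+1),
\]
and since $k-s+1 < k+1$ yields $\psi'(k-s+1) - \psi'(k+1) > 0$, we have $f'(k) > 0$. So $f$ attains its minimum at $k = s+2$, and it suffices to prove $f(s+2) \ge 0$. Viewing $f(s+2)$ as a function of $t$ and $n$ with the other variables fixed, one checks $\partial_t f(s+2) = \psi'(r-t-1) > 0$ and $\partial_n f(s+2) = \psi'(n+s+2-r) > 0$, so its minimum is approached as $t \to 1$ and $n \to (r-1)(s+1)^+$ (the lower bound forced by $r = \cl{n/(s+1)}$). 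Using $(r-1)(s+1) + s + 2 - r = rs + 1$, and noting that the strict inequality $n > (r-1)(s+1)$ upgrades a non-strict boundary inequality to a strict one, the lemma reduces to
\[
\psi(rs+1) + \psi(3) \ge \psi(s+3) + \psi(r-2), \qquad r \ge 4 \text{ integer}, \ s \ge 2 \text{ real}.
\]

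For fixed $r$, apply the same strategy in $s$. With $h(s) := \psi(rs+1) - \psi(s+3)$,
\[
h'(s) = r\psi'(rs+1) - \psi'(s+3) > \frac{r}{rs+1} - \frac{1}{s+2} = \frac{1}{s+1/r} - \frac{1}{s+2} > 0
\]
(the last step since $1/r \le 1/4 < 2$), so $h(s) \ge h(2) = \psi(2r+1) - \psi(5)$. The claim thus reduces at $s = 2$ to $p(r) := \psi(2r+1) - \psi(r-2) \ge \psi(5) - \psi(3) = 7/12$. But
\[
p'(r) = 2\psi'(2r+1) - \psi'(r-2) < \frac{2}{2r} - \frac{1}{r-2} = \frac{1}{r} - \frac{1}{r-2} < 0,
\]
so $p$ is strictly decreasing on $r \ge 4$. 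Since $\psi(x) = \ln x + O(1/x)$ yields $p(r) \to \ln 2$ as $r \to \infty$, we have $p(r) > \ln 2 > 7/12$ for all $r \ge 4$, completing the argument.

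The main obstacle is arranging the reductions in the right order: each individual monotonicity step is elementary from the shape of $\psi'$, but the original inequality involves five parameters and must be peeled down one at a time so that the remaining claim is genuinely simpler. Once this is done, the substance of the proof boils down to the transparent numerical comparison $\ln 2 > 7/12$.
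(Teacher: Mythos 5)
Your proof is correct and follows the same cascade-of-monotonicity strategy as the paper's proof, terminating in the same numerical inequality $\ln 2 > 7/12 = \psi(5)-\psi(3)$. The substantive difference is in the handling of the $n$-dependence (and with it the integer parameter $r=\cl{n/(s+1)}$). After reducing to $t=1$, $k=s+2$, the paper shows that the one-sided limits $\lim_{n\downarrow\ell(s+1)}f(n)$ form a strictly decreasing sequence in $\ell$, which requires the integral representation of $\psi$ and the estimate
$$\psi((\ell+1)s+1)-\psi((\ell+1)s+s+1)=-\int_0^1 x^{(\ell+1)s}\frac{1-x^s}{1-x}\,dx>-\frac1{\ell-1};$$
this identifies the global infimum with the $n\to\infty$ limit $\log s+\psi(3)-\psi(s+3)$, and the $s$-reduction happens only afterwards. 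You instead freeze the integer $r\ge4$, pass to the left endpoint $n\to(r-1)(s+1)^+$ to obtain $\psi(rs+1)+\psi(3)-\psi(s+3)-\psi(r-2)$, carry out the $s$-reduction first (down to $s=2$), and only then show the resulting $p(r)=\psi(2r+1)-\psi(r-2)$ is decreasing with limit $\ln 2$. This reordering lets you use only the elementary trigamma bounds $1/x<\psi'(x)<1/(x-1)$ throughout, avoiding the paper's integral estimate for the jump values, so your route is arguably cleaner at that point while proving the same boundary values nonnegative. (As a minor aside: your $\psi(5)-\psi(3)=1/3+1/4=7/12$ is correct; the paper has a typographical slip there, writing $1/3+4/5$ for what should be $1/3+1/4$, though its final verdict $>0$ is unaffected.)
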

\begin{proof}
We want to prove that
\begin{equation} \label{eq:1}
\psi(n+k-r)-\psi(r-t-k+s+1)+\psi(k-s+1)-\psi(k+1)>0
\end{equation}
for the range of parameters indicated in the statement of the lemma.
Since $\psi(x)$ is monotone increasing for $x>0$
(this follows e.g.\ from \cite[Eq.~(1.2.14)]{AnARAA}), the left-hand
side of \eqref{eq:1} is monotone increasing in $t$. It therefore
suffices to prove \eqref{eq:1} for $t=1$, that is, it suffices to prove
\begin{equation} \label{eq:2}
\psi(n+k-r)-\psi(r-k+s)+\psi(k-s+1)-\psi(k+1)>0.
\end{equation}

Next we claim that the left-hand side of \eqref{eq:2} is monotone
increasing in $k$. To see this, we differentiate the left-hand side of
\eqref{eq:2} with respect to $k$, to obtain
\begin{equation} \label{eq:3}
\psi'(n+k-r)+\psi'(r-k+s)+\psi'(k-s+1)-\psi'(k+1).
\end{equation}
Since $\psi(x)$ is monotone increasing, the first two terms in
\eqref{eq:3} are positive. Moreover, $\psi(x)$ is a concave function
for $x>0$ (this follows also from \cite[Eq.~(1.2.14)]{AnARAA}),
whence $\psi'(k-s+1)-\psi'(k+1)>0$. This proves that the expression in
\eqref{eq:3} is positive, that is, that the derivative with respect to
$k$ of the left-hand side of \eqref{eq:2} is positive. This
establishes our claim.

As a result of the above argument, we see that it suffices to prove
\eqref{eq:2} for the smallest $k$, that is, for $k=s+2$. In other
words, it suffices to prove
\begin{equation} \label{eq:4}
\psi(n+s-r+2)-\psi(r-2)+\psi(3)-\psi(s+3)>0.
\end{equation}

We now investigate the behaviour of the left-hand side of \eqref{eq:4}
as a function of $n$, which we denote by $f(n)$ (ignoring the
dependence of the expression on $s$ at this point).
Clearly, as long as $n$ stays strictly between successive
multiples of $s+1$, $r=\cl{n/(s+1)}$ does not change, and $f(n)$
is monotone increasing in $n$ in this
range. However, if $n$ changes from $n=\ell(s+1)$, say, to something
just marginally larger, then $r$ jumps from $\ell$ to $\ell+1$,
thereby changing the value of $f$ discontinuously. The limit value
$\lim_{n\downarrow\ell(s+1)}f(n)$ is given by
$$\lim_{n\downarrow\ell(s+1)}f(n)=
\psi(\ell(s+1)+s-\ell+1)-\psi(\ell-1)+\psi(3)-\psi(s+3).$$
By the argument above, we know that $f(n)$ stays above this value for
$\ell(s+1)<n\le (\ell+1)(s+1)$.
Let us examine the difference of two such limit values:
\begin{align}\notag
\lim_{n\downarrow\ell(s+1)}f(n)&{}
-\lim_{n\downarrow(\ell+1)(s+1)}f(n)=
\psi(\ell(s+1)+s-\ell+1)-\psi(\ell-1)\\
\notag
&\kern4cm
-\psi((\ell+1)(s+1)+s-\ell)+\psi(\ell)\\
&=\psi((\ell+1) s+1)-\psi((\ell+1)s+s+1)
+\frac {1} {\ell-1},
\label{eq:5}
\end{align}
where we used \cite[Eq.~(1.2.15) with $n=1$]{AnARAA} to obtain the
last line. 
By \cite[Eq.~(1.2.12)]{AnARAA}, we have
$\psi'(1)=-\gamma$, where $\gamma$ is the Euler--Mascheroni
constant. Making use of the integral representation
$$\psi(a)=-\gamma+\int _{0} ^{1}\frac {1-x^{a-1}} {1-x}\,dx,\quad
\quad \Re(a-1)>0$$ (see \cite[Theorem~1.6.1(ii) after change of
variables $x=e^{-z}$]{AnARAA}), 
we estimate
\begin{align*}
\psi((\ell+1) s+1)-\psi((\ell+1)s+s+1)
&=
-\int _{0} ^{1}\frac
{x^{(\ell+1) s}-x^{(\ell+1)s+s}} {1-x}\,dx\\
&=-\int _{0} ^{1}x^{(\ell+1) s}\frac
{1-x^{s}} {1-x}\,dx\\
&\ge- s\int _{0} ^{1}x^{(\ell+1)s}\,dx\\
&\ge -\frac s{(\ell+1)s+1}
>-\frac {1} {\ell+1}>-\frac {1} {\ell-1}.
\end{align*}
This shows that the difference in \eqref{eq:5} is (strictly) negative,
that is, that the left-hand side of \eqref{eq:4} becomes smaller when
we ``jump" from (slightly above) $n=\ell(s+1)$ to
(slightly above) $n=(\ell+1)(s+1)$, while the values
in between stay above the limit value from the right
at $n\downarrow(\ell+1)(s+1)$.
Therefore, it suffices to prove \eqref{eq:4} in the limit
$n\to\infty$. By recalling the asymptotic behaviour
\begin{equation} \label{eq:psi}
\psi(x)=\log x+O\left(\frac {1}
{x}\right),
\quad \quad \text {as $x\to\infty$},
\end{equation}
of the digamma function (cf.\ \cite[Cor.~1.4.5]{AnARAA}), we see that this
limit of the left-hand side of \eqref{eq:4} is
$\log s+\psi(3)-\psi(s+3)$, so that it remains to prove
\begin{equation} \label{eq:7}
\log s+\psi(3)-\psi(s+3)>0.
\end{equation}
Also here, we look at the derivative of the left-hand side with
respect to $s$:
$$\frac {1} {s}-\psi'(s+3).$$
By \cite[Eq.~(1.2.14)]{AnARAA}, this can be rewritten in the form
$$\frac {1} {s}-
\sum _{m=0} ^{\infty}\frac {1} {(s+m+3)^2}.$$
The infinite sum can be interpreted as the integral of the step function
$$x\mapsto \frac {1} {\cl{x}^2}$$
between $x=s+2$ and $x=\infty$. The function being bounded above by
the function $x\mapsto 1/x^2$, we conclude
$$\frac {1} {s}-\psi'(s+3)
>\frac {1} {s}-\int _{s+2} ^{\infty}\frac {dx} {x^2}=
\frac {1} {s}-\frac {1} {s+2}>0.$$
In other words, the derivative with respect to $s$ of the left-hand
side of \eqref{eq:7} is always positive, hence it suffices to verify
\eqref{eq:7} for $s=2$:
$$
\log 2+\psi(3)-\psi(5)=\log 2-\frac {1} {3}-\frac {4} {5}>0,
$$
where we used again \cite[Eq.(1.2.15)]{AnARAA}.

This completes the proof of the lemma.
\end{proof}


\begin{lemma} \label{lem:3}
Let the real numbers $n,k_0,s$ be given with $n\ge1$, $s\ge2$, $s+2\le
k_0\le r+s-t-1$. Suppose that \eqref{eq:ungl} holds for this choice of
$n,k_0,s$. Then it also holds for $k$ in an interval $[k_0,k_0+\ep)$
for a suitable $\ep>0$.
\end{lemma}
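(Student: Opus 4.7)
The plan is to view both sides of \eqref{eq:ungl} as smooth functions of $k$ via the $\Ga$-function extension of the binomial coefficient, and to combine continuity with the digamma estimate of Lemma~\ref{lem:1}. Set
$$F(k) = \binom{n+k-r-1}{k} - \sum_{t=1}^r \binom{r-t}{k-s}\binom{n-t+s-1}{s-1}.$$
If $F(k_0) > 0$, continuity of $F$ immediately furnishes some $\ep > 0$ with $F(k)>0$ on $[k_0,k_0+\ep)$, so the substantive case is $F(k_0)=0$. In that case my aim is to prove $F'(k_0)>0$; together with $F(k_0)=0$, a first-order Taylor expansion then delivers $F(k)>0$ on $(k_0,k_0+\ep)$ for $\ep$ sufficiently small, which with $F(k_0)=0$ gives \eqref{eq:ungl} on $[k_0,k_0+\ep)$.

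To compute $F'(k_0)$, write $L(k)=\binom{n+k-r-1}{k}$, $R_t(k)=\binom{r-t}{k-s}$ and $c_t=\binom{n-t+s-1}{s-1}$. The logarithmic derivatives with respect to $k$ are
$$\frac{L'(k)}{L(k)} = \psi(n+k-r)-\psi(k+1), \qquad \frac{R_t'(k)}{R_t(k)} = \psi(r-t-k+s+1)-\psi(k-s+1).$$
Inserting the standing equality $L(k_0)=\sum_t R_t(k_0)c_t$ into $F'(k_0)=L'(k_0)-\sum_t R_t'(k_0)c_t$ rewrites
$$F'(k_0) = \sum_{t=1}^r c_t\,R_t(k_0)\Bigl\{\bigl[\psi(n+k_0-r)-\psi(k_0+1)\bigr] - \bigl[\psi(r-t-k_0+s+1)-\psi(k_0-s+1)\bigr]\Bigr\}.$$
By Lemma~\ref{lem:1}, each brace is strictly positive in the range of indices $t$ singled out by the hypothesis, while the weights $c_t R_t(k_0)$ are nonnegative with at least one of them strictly positive (since $L(k_0)>0$ forces the sum to be positive). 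Hence $F'(k_0)>0$, as desired.

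The step I expect to require the most care is controlling those boundary indices $t$ for which Lemma~\ref{lem:1} is not directly available: the condition $s+2\le k_0\le r+s-t-1$ in the hypothesis of Lemma~\ref{lem:3} is tailored exactly to secure Lemma~\ref{lem:1} term-by-term in the expression above, so the delicate point will be to verify that this range of $t$ already captures every index with $R_t(k_0)>0$ — for indices outside this range the factor $R_t(k_0)$ vanishes via the Gamma-function interpretation and contributes nothing to $F'(k_0)$.
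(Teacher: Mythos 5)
Your proposal follows essentially the same route as the paper: extend both sides of \eqref{eq:ungl} to real $k$ via $\Ga$-functions, compute logarithmic derivatives in terms of $\psi$, and invoke Lemma~\ref{lem:1} term-by-term. The only organizational difference is the case split. You treat $F(k_0)>0$ by bare continuity and reserve the derivative argument for $F(k_0)=0$, where you exploit the equality $L(k_0)=\sum_t R_t(k_0)c_t$ to reduce $F'(k_0)$ to a nonnegative combination of the positive quantities from Lemma~\ref{lem:1}. The paper instead handles both cases uniformly: starting from the derivative of the right-hand side, it applies Lemma~\ref{lem:1} term-by-term to get
$\sum_t\bigl(\psi(n+k_0-r)-\psi(k_0+1)\bigr)\binom{r-t}{k_0-s}\binom{n-t+s-1}{s-1}$,
and then uses the hypothesis \eqref{eq:ungl} itself (together with the positivity of $\psi(n+k_0-r)-\psi(k_0+1)$) to bound this by the derivative of the left-hand side. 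This avoids the dichotomy and makes a single clean comparison of derivatives, at the minor cost of needing the sign of $\psi(n+k_0-r)-\psi(k_0+1)$; your version sidesteps that sign check in the $F(k_0)=0$ case but needs the additional continuity case. Both arguments are correct and rest on the same two ingredients, and your closing remark about the boundary indices $t$ (those with $R_t(k_0)$ vanishing) correctly identifies the one place in both proofs where the bookkeeping of the $\Ga$-extension deserves care.
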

\begin{proof}
We extend the binomial coefficients in \eqref{eq:ungl} to real values
of $k$, by using gamma functions. To be precise, we extend the
left-hand side of \eqref{eq:ungl} to
$$\frac {\Ga(n+k-r)} {\Ga(k+1)\,(n-r-1)!},$$
and the right-hand side to
$$\sum _{t=1} ^{r}\frac {(r-t)!} {\Ga(k-s+1)\,\Ga(r-t-k+s+1)}
\binom {n-t+s-1}{s-1}.
$$
In abuse of notation, we shall still use binomial coefficient
notation, even if we allow real values of $k$.

We now compute the derivative at $k=k_0$ on both sides of
\eqref{eq:ungl}. On the left-hand side, this is
$$\big(\psi(n+k_0-r)-\psi(k_0+1)\big)\binom {n+k_0-r-1} {k_0},$$
while on the right-hand side this is
$$\sum _{t=1} ^{r}
\big(\psi(r-t-k_0+s+1)-\psi(k_0-s+1)\big)
\binom {r-t}{k_0-s}\binom {n-t+s-1}{s-1}.
$$
By using Lemma~\ref{lem:1}, we can estimate the derivative of the
right-hand side as follows:
\begin{align*}
\sum _{t=1} ^{r}
\big(\psi(r-{}&t-k_0+s+1)-\psi(k_0-s+1)\big)
\binom {r-t}{k_0-s}\binom {n-t+s-1}{s-1}\\
&<\sum _{t=1} ^{r}
\big(\psi(n+k_0-r)-\psi(k_0+1)\big)
\binom {r-t}{k_0-s}\binom {n-t+s-1}{s-1}\\
&<\big(\psi(n+k_0-r)-\psi(k_0+1)\big)
\binom {n+k_0-r-1} {k_0}.
\end{align*}
The last expression is however exactly the derivative of the left-hand
side of \eqref{eq:ungl}. Hence, the left-hand side of \eqref{eq:ungl}
must also exceed the right-hand side in a small ``neighbourhood"
$[k_0,\ep)$ to the right of $k_0$. This proves the lemma.
\end{proof}

\begin{lemma} \label{lem:5}
For all positive integers $n$ and $s$ with $n>3s+3$ and $s\ge2$, we have
\begin{equation} \label{eq:11}
2\binom {n+s-r+1} {s+2}\ge
\binom {n+s+1} {s+2}
-r\binom {n+s} {s+1}
+\binom r2\binom {n+s-1} {s},
\end{equation}
where, as before, $r=\cl{n/(s+1)}$.
\end{lemma}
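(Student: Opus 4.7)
The plan is to use a binomial identity to expose the right-hand side of \eqref{eq:11} as a truncation and then bound the residual alternating tail. Extracting the coefficient of $T^{s+2}$ from $(1-T)^{r-n}=(1-T)^r(1-T)^{-n}$ yields
$$\binom{n+s-r+1}{s+2} = \sum_{j=0}^{s+2}(-1)^j\binom{r}{j}\binom{n+s+1-j}{s+2-j}.$$
Since the right-hand side of \eqref{eq:11} is precisely the $j=0,1,2$ part of this sum, \eqref{eq:11} is equivalent to
$$\binom{n+s-r+1}{s+2} \ge \sum_{j=3}^{s+2}(-1)^{j-1}\binom{r}{j}\binom{n+s+1-j}{s+2-j}.$$

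Next I would show that this alternating tail is bounded above by its leading term $\binom{r}{3}\binom{n+s-2}{s-1}$. Writing $b_i = \binom{r}{i+2}\binom{n+s-1-i}{s-i}$ for $i=1,\dots,s$, the successive ratio
$$\frac{b_{i+1}}{b_i} = \frac{(r-i-2)(s-i)}{(i+3)(n+s-1-i)}$$
has numerator non-increasing in $i$ (on the range where both factors are nonnegative, which covers all nonzero $b_i$) and denominator increasing in $i$ (the latter because $i+3<n+s-1-i$ for all admissible $i$ when $n>3s+3$). So it suffices to verify the ratio at $i=1$, i.e.\ $(r-3)(s-1)<4(n+s-2)$, which follows immediately from $r\le(n+s)/(s+1)$. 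The $b_i$ thus form a strictly decreasing sequence of positive numbers, and the standard alternating-series estimate yields the desired bound.

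It then remains to prove the cleaner inequality
$$\binom{n+s-r+1}{s+2} \ge \binom{r}{3}\binom{n+s-2}{s-1}.$$
Expanding both sides as products of $s+2$ linear factors, this takes the form
$$(n-r)(n-r+1)\cdots(n-r+s+1) \ge \binom{s+2}{3}\,r(r-1)(r-2)\,n(n+1)\cdots(n+s-2).$$
I would pair the $s-1$ largest factors on the left with the $s-1$ factors on the right, reducing the problem to a cubic-versus-cubic estimate of $(n-r)(n-r+1)(n-r+2)$ against $\binom{s+2}{3}\,r(r-1)(r-2)$ up to a bounded multiplicative distortion $\prod_{i=0}^{s-2}(n+i)/(n-r+3+i)$. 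The key input is the bound $n-r\ge s(n-1)/(s+1)$, which is immediate from $r=\lceil n/(s+1)\rceil$, and asymptotically the inequality holds with room to spare.

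The main obstacle I anticipate is the boundary $n$ just above $3s+3$, where $r$ takes a small value (at $n=3s+4$ one has $r=4$) and slack in the estimates is minimal. A direct verification for $r=4$---or, more robustly, a split into the cases $r=4,5$ handled by explicit computation and $r\ge 6$ handled by the asymptotic argument---appears unavoidable. Once the boundary cases are settled, a monotonicity argument in $n$ for fixed $r$ (in the spirit of the jump analysis at multiples of $s+1$ used in the proof of Lemma~\ref{lem:1}) extends the inequality to the full range $n>3s+3$.
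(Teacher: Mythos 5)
Your opening reduction is correct and genuinely different from the paper's. Extracting the coefficient of $T^{s+2}$ from $(1-T)^{r-n}=(1-T)^r(1-T)^{-n}$ indeed gives
$$
\binom{n+s-r+1}{s+2}=\sum_{j=0}^{s+2}(-1)^j\binom{r}{j}\binom{n+s+1-j}{s+2-j},
$$
the right-hand side of \eqref{eq:11} is precisely the $j=0,1,2$ truncation, and \eqref{eq:11} is equivalent to
$\binom{n+s-r+1}{s+2}\ge\sum_{j=3}^{s+2}(-1)^{j-1}\binom{r}{j}\binom{n+s+1-j}{s+2-j}$.
Your monotonicity of the ratio $b_{i+1}/b_i=(r-i-2)(s-i)/\big((i+3)(n+s-1-i)\big)$ also checks out (the denominator is increasing on the relevant range because $n>s+3$), so the alternating-series bound legitimately reduces the lemma to the clean inequality
$\binom{n+s-r+1}{s+2}\ge\binom{r}{3}\binom{n+s-2}{s-1}$.
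The paper takes an entirely different route: it treats $n$ as a real variable, compares digamma-derivative growth rates of both sides of \eqref{eq:11} on each interval $(\ell(s+1),(\ell+1)(s+1)]$, and after a substitution reduces everything to a factorial inequality \eqref{eq:19} which is again handled by a derivative comparison. Your reduction isolates the combinatorial content and, if closed, would give a proof free of the real-analytic machinery, which is an attractive simplification.

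However, you never actually prove the clean inequality, and the sketch you give would not close as stated. The pairing argument leaves the cubic comparison $(n-r)(n-r+1)(n-r+2)$ versus $\binom{s+2}{3}r(r-1)(r-2)$ multiplied by the distortion $\prod_{i=0}^{s-2}(n+i)/(n-r+3+i)$, and you claim the asymptotics hold "with room to spare." They do not: for $s=2$ and $n\to\infty$ (with $n\approx 3r$), the ratio of the two sides of the clean inequality tends to $4/3$, so the margin is quite thin, and a crude bound of the distortion by $e$ would actually fail there (one needs the sharper bound $\left(\tfrac{s+1}{s}\right)^{s-1}$ and careful bookkeeping). You acknowledge boundary trouble near $n=3s+4$ and propose case splits on $r$, but the genuine pinch is at large $n$ for small $s$, which your sketch does not address. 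The reduction is a good idea and the target inequality is true, but establishing it still requires a careful monotonicity argument in $n$ of roughly the same character (if not the same length) as the paper's; as written, the proof has a real gap at this final step.
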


\begin{proof}
We proceed in a spirit similar to the one in the proof of
Lemma~\ref{lem:3}. We regard both sides of \eqref{eq:11} as
functions in the {\it real\/} variable $n$. The reader should note
that the assumption that $n>3s+3$ implies that $r\ge4$, a fact that
will be used frequently without further mention.

Let first $n$ be strictly between $\ell(s+1)$ and $(\ell+1)(s+1)$,
for some fixed non-negative integer $\ell$. Then $r=\ell+1$, so that
both sides of \eqref{eq:11} become polynomial (whence continuous)
functions in $n$. The derivative of the left-hand side of
\eqref{eq:11} with respect to $n$ (in the interval
$(\ell(s+1),(\ell+1)(s+1))$) equals
\begin{equation} \label{eq:12}
2\big(\psi(n+s-r+2)-\psi(n-r)\big)\binom {n+s-r+1} {s+2},
\end{equation}
while the derivative of the right-hand side of
\eqref{eq:11} with respect to $n$ equals
\begin{equation} \label{eq:13}
\left(\psi(n+s)-\psi(n)+\frac {2n+2s+1-r(s+2)} {N(n,s)}\right)
\frac {(n+s-1)!} {(s+2)!\,(n-1)!}N(n,s),
\end{equation}
where
$$N(n,s)=(n+s)(n+s+1)-r(n+s)(s+2)+\binom r2(s+1)(s+2).$$

We claim that\footnote{Our original proof had a weaker inequality
at this point, which however turns out to be not sufficient.
This gap was pointed out by Jiayuan Lin.
In addition, he provided the following argument establishing 
\eqref{eq:14}, and he kindly gave us the permission to reproduce it
here.}
\begin{equation} \label{eq:14}
\psi(n+s-r+2)-\psi(n-r)>
\psi(n+s)-\psi(n)+\frac {2n+2s+1-r(s+2)} {N(n,s)}.
\end{equation}
This would imply that, provided \eqref{eq:11} holds for
some $n$ in the (closed) interval $[\ell(s+1),(\ell+1)(s+1)]$,
then the derivative of the left-hand side of
\eqref{eq:11} would be larger than the derivative of the right-hand
side of \eqref{eq:11} at this $n$, and hence the function on the
left-hand side of \eqref{eq:11} would grow faster than the right-hand
side of \eqref{eq:11} for $n$ in $(\ell(s+1),(\ell+1)(s+1)]$. In
turn, this would mean that it would suffice to show the validity of
\eqref{eq:11} for $n\downarrow \ell(s+1)$ (that is, for $n=\ell(s+1)$
and $r=\ell+1$), to conclude that \eqref{eq:11} holds for the whole
interval $(\ell(s+1),(\ell+1)(s+1)]$.

We next embark on the proof of \eqref{eq:14}.
Using \cite[Eq.~(1.2.15)]{AnARAA}, we see that
\begin{align*}
\psi(n&+s-r+2)-\psi(n-r)-
\big(\psi(n+s)-\psi(n)\big)
=\sum _{i=0} ^{s+1}\frac {1} {n-r+i}-\sum _{i=0} ^{s-1}\frac {1}
{n+i}\\
&=\frac {1} {n-r}+\frac {1} {n-r+1}+
\sum _{i=0} ^{s-1}\left(\frac {1} {n-r+i+2}-\frac {1} {n+i}\right)\\
&=\frac {2} {n-r+1}+\frac {1} {(n-r)(n-r+1)}+
\sum _{i=0} ^{s-1}\frac {r-2} {(n-r+i+2)(n+i)}\\
&>\frac {2} {n-r+1}+\frac {1} {(n-r)(n-r+1)}+
\sum _{i=0} ^{s-1}\frac {r-2} {(n+i-1)(n+i)}\\
&>\frac {2} {n-r+1}+\frac {1} {(n-r)(n-r+1)}+
(r-2)\sum _{i=0} ^{s-1}\left(\frac {1} {n+i-1}-\frac {1}
{n+i}\right)\\
&>\frac {2} {n-r+1}+\frac {1} {(n-r)(n-r+1)}+
(r-2)\left(\frac {1} {n-1}-\frac {1}
{n+s-1}\right)\\
&>\frac {2} {n-r+1}+\frac {1} {(n-r)(n-r+1)}+
\frac {(r-2)s} {(n-1)(n+s-1)}\\
&>\frac {2} {n-r+1}+\frac {(r-2)s+1} {(n-1)(n+s-1)}.
\end{align*}
Hence, if we are able to prove that
\begin{equation} \label{eq:14A}
\frac {2} {n-r+1}+\frac {(r-2)s+1} {(n-1)(n+s-1)}\ge
\frac {2n+2s+1-r(s+2)} {N(n,s)},
\end{equation}
the inequality \eqref{eq:14} will follow immediately. 
We now claim that
\begin{equation} \label{eq:15A}
(r-1)N(n,s)\ge (n-1)(n+s)
\end{equation}
and
\begin{equation} \label{eq:15B}
2N(n,s)+((r-2)s+1)s\ge(n-r+1)(2n+2s+1-r(s+2)).
\end{equation}
If, for the moment, we assume the validity of \eqref{eq:15A} and
\eqref{eq:15B}, then we infer
\begin{align*}
\frac {2} {n-r+1}+\frac {(r-2)s+1} {(n-1)(n+s-1)}
&=\frac {1} {n-r+1}\left(2+\frac {((r-2)s+1)(n-r+1)}
{(n-1)(n+s-1)}\right)\\
&\ge\frac {1} {n-r+1}\left(2+\frac {((r-2)s+1)(n-r+1)}
{(r-1)N(n,s)}\right)\\
&\ge\frac {1} {n-r+1}\left(2+\frac {((r-2)s+1)s}
{N(n,s)}\right)\\
&\ge\frac {2n+2s+1-r(s+2)} {N(n,s)},
\end{align*}
which is exactly \eqref{eq:14A}. Here we used \eqref{eq:15A} to
obtain the second line, the simple fact that 
$$\frac {n-r+1} {r-1}\ge\frac {(r-1)(s+1)-r+1} {r-1}=s$$
to obtain the third line, and \eqref{eq:15B} to obtain the last
line. In summary, \eqref{eq:15A} and \eqref{eq:15B} together would
imply \eqref{eq:14A}, and hence \eqref{eq:14}.

To see \eqref{eq:15A}, we rewrite it explicitly in the form
\begin{equation} \label{eq:15C}
(r-1)\binom r2(s+1)(s+2)
\ge (n+s)\big(n-1-(r-1)(n+s+1)+r(r-1)(s+2)\big).
\end{equation}
We write $n=r(s+1)-n_0$, with $0\le n_0\le s$. If we substitute this
in the inequality above, then the right-hand side of \eqref{eq:15C} 
turns into
$$
\big(r(s+1)+s-n_0\big)
\big(r(s+1)-1-(r-1)(r+1)(s+1)+(r-2)n_0+r(r-1)(s+2)\big).
$$
We consider this as a quadratic function in $n_0$. It has its unique 
maximum at
$$
\frac {r^2s-rs-r-3s} {2(r-2)}\ge
\frac {3rs-r-3s} {2(r-2)}\ge
\frac {2rs-4s} {2(r-2)}=s.
$$
It is therefore monotone increasing on the interval $[0,s]$ and
consequently attains its maximal value on the interval $[0,s]$ at $n_0=s$.
So it suffices to verify \eqref{eq:15C} at $n_0=s$. After
simplification, this turns out to be equivalent to 
$$\binom r2(s+1)(s(r-3)-2)\ge0,$$
which, by our assumptions on $r$ and $s$, is trivially true.

To see \eqref{eq:15B}, we again substitute $r(s+1)-n_0$ for $n$
(with $0\le n_0\le s$) to obtain the equivalent inequality
$$
rs(r-3)+s-1+(rs-2s+1)n_0>0,
$$
which is trivially true by our assumptions on $r$, $s$, and $n_0$.

Altogether, we have now established \eqref{eq:14}.
Hence, the conclusion of the paragraph following \eqref{eq:14}
that it suffices to
prove \eqref{eq:11} for $n=\ell(s+1)$ and $r=\ell+1$ holds as well.

We substitute $n=\ell(s+1)$ and $r=\ell+1$ in \eqref{eq:11}:
\begin{multline*} 
2\binom {(\ell+1)s} {s+2}\ge
\binom {(\ell+1)(s+1)} {s+2}\\
-(\ell+1)\binom {(\ell+1)(s+1)-1} {s+1}
+\binom {\ell+1}2\binom {(\ell+1)(s+1)-2} {s},
\end{multline*}
and, after simplification, we obtain the equivalent inequality
\begin{equation*} 
2\frac {((\ell+1)s-1)!} {(\ell s-2)!}\ge
\frac {1} {2}\frac {((\ell+1)(s+1)-2)!\,(\ell(s+1)-2)} {(\ell(s+1)-1)!}.
\end{equation*}

We shall actually establish the stronger inequality
\begin{equation} \label{eq:19}
2\frac {((\ell+1)s-1)!} {(\ell s-2)!}\ge
\frac {1} {2}\frac {((\ell+1)(s+1)-2)!} {(\ell(s+1)-2)!}.
\end{equation}
In order to do so, we regard the functions in \eqref{eq:19} again as
functions in {\it real\/} variables, more precisely, as functions
in the {\it real\/} variable $\ell$, while we think of $s$ as being
fixed.

We first verify that \eqref{eq:19} holds for the
smallest possible value of $\ell=r-1$, that is, for $\ell=3$. For
that value of $\ell$, the inequality \eqref{eq:19} becomes
$$
2\frac {(4s-1)!} {(3 s-2)!}\ge
\frac {1} {2}\frac {(4s+2)!} {(3s+1)!},
$$
or, equivalently,
$$
4(3s+1)3s(3s-1)\ge
(4s+2)(4s+1)4s,
$$
which is indeed true for $s\ge2$.

Next we compute the derivative of both sides of \eqref{eq:19} with
respect to $\ell$. On the left-hand side, we obtain
\begin{equation} \label{eq:20}
2s\left(\psi((\ell+1)s)-\psi(\ell s-1)\right)
\frac {((\ell+1)s-1)!} {(\ell s-2)!},
\end{equation}
while on the right-hand side we obtain
\begin{equation} \label{eq:21}
\frac {s+1} {2}
\left(\psi((\ell+1)(s+1)-1)-\psi(\ell(s+1)-1)\right)
\frac {((\ell+1)(s+1)-2)!} {(\ell(s+1)-2)!}.
\end{equation}
Using \cite[Eq.~(1.2.15)]{AnARAA}, it is straightforward to see that
$$\psi((\ell+1)s)-\psi(\ell s-1)>
\psi((\ell+1)(s+1)-1)-\psi(\ell(s+1)-1).$$
Furthermore, we have $2s>\frac {s+1} {2}$, so that
$$
2s\left(\psi((\ell+1)s)-\psi(\ell s-1)\right)>
\frac {s+1} {2}
\left(\psi((\ell+1)(s+1)-1)-\psi(\ell(s+1)-1)\right)
$$
for all $\ell\ge3$. Since we already know that \eqref{eq:19} holds
for $\ell=3$, it then follows that the derivative of the left-hand
side of \eqref{eq:19} (see \eqref{eq:20}) is always larger than the
derivative of the right-hand side (see \eqref{eq:21}). This
establishes \eqref{eq:19} and completes the proof of the lemma.
\end{proof}

\begin{proof}[Proof of Proposition~\ref{prop:1}]
The assertion is true for $k=s+1$ because of the choice of $r$.
By comparing \eqref{eq:8} and \eqref{eq:9} in
Lemma~\ref{lem:4}, the assertion for $k=s+2$, which reads
$$
\binom {n+s-r+1}{s+2}\ge
\sum _{t=1} ^{r}\binom {r-t}{2}\binom {n-t+s-1}{s-1},
$$
can be rewritten as
$$
\binom {n+s-r+1}{s+2}\ge
- \binom {n+s-r+1}{s+2}+
\sum _{j=s} ^{s+2}(-1)^{s-j+2}\binom {n+j-1}j\binom r{s-j+2},
$$
or, equivalently, as
$$
2\binom {n+s-r+1} {s+2}\ge
\binom {n+s+1} {s+2}
-r\binom {n+s} {s}
+\binom r2\binom {n+s-1} {s}.
$$
Remembering Remark~\ref{rem:1}, we see that it is enough to show this
for $r>3$, that is, for $n>3s+3$.
Lemma~\ref{lem:5} shows that the above inequality indeed holds for
that range of $n$.
Lemma~\ref{lem:3} then implies that the assertion must be true for all
$k\ge s+2$.
\end{proof}

\end{document}